  \def\section{\@startsection{section}{1}{\z@}%
     {4ex plus 0ex}%
     {4ex plus 0ex}%
     {\normalfont\large\bfseries}}
  \def\subsection{\@startsection{subsection}{1}{\z@}%
     {2ex plus 0ex}%
     {2ex plus 0ex}%
     {\normalfont\normalsize\bfseries}}
\theoremstyle{plain}
 \newtheorem{theorem}{Theorem}[section]
 \crefname{theorem}{Theorem}{Theorems}
 \crefname{proposition}{Proposition}{Propositions}
 \crefname{lemma}{Lemma}{Lemmas}
 \crefname{corollary}{Corollary}{Corollaries}
 \crefname{conjecture}{Conjecture}{Conjectures}
 \crefname{question}{Question}{Questions}
 \crefname{problem}{Problem}{Problems}
\theoremstyle{definition} 
 \crefname{definition}{Definition}{Definitions}
 \crefname{example}{Example}{Examples}
 \newtheorem{remark}[theorem]{Remark}
 \crefname{remark}{Remark}{Remarks}
     \title{Counterexamples to the local-global principle associated with Swinnerton-Dyer's cubic form}
     \author{Yoshinosuke Hirakawa}
     \address[Yoshinosuke Hirakawa]{Department of Science and Technology, Keio University, 14-1, Hiyoshi 3-chome, Kohoku-ku, Yokohama-shi, Kanagawa-ken, Japan}
     \email{hirakawa@keio.jp}
     \thanks{This research was supported by JSPS KAKENHI Grant Number JP15J05818,
     the Research Grant of Keio Leading-edge Laboratory of Science \& Technology (Grant Numbers 2018-2019 000036 and 2019-2020 000074).
     This research was supported in part by KAKENHI 18H05233.
     This research was conducted as part of the KiPAS program FY2014--2018 of the Faculty of Science and Technology at Keio University as well as the JSPS Core-to-Core program ``Foundation of a Global Research Cooperative Center in Mathematics focused on Number Theory and Geometry".}
     \date{\today}
     \keywords{Diophantine equations, local-global principle, norm form equations, cyclotomic fields}
     \subjclass[2010]{primary 11D57; secondary 11D41, 11D72, 11R18}
\begin{document}
     \begin{abstract}
     In this paper,
we imitate a classical construction of a counterexample
to the local-global principle of cubic forms of 4 variables
which was discovered first by Swinnerton-Dyer (Mathematica (1962)).
Our construction gives new explicit families of counterexamples
in homogeneous forms of $4, 5, 6, ..., 2n+2$ variables of degree $2n+1$ for infinitely many integers $n$.
It is contrastive to Swinnerton-Dyer's original construction that
we do not need any concrete calculation in the proof of local solubility.
     \end{abstract}
     \maketitle


\section{Introduction}

In number theory,
it is one of the most classical subjects
to determine the set of solutions of a given algebraic equation
in the field $\mathbb{Q}$ of rational numbers.
In particular,
it is important as the first step to determine whether
a given equation has a (non-trivial) rational solution or not.

The first non-trivial objects are quadratic polynomials,
and this case culminates with a classical theorem of Minkowski (cf. \cite[Theorem 8, Ch. IV]{Serre})
which (essentially) states that
every quadratic homogeneous polynomial (i.e., quadratic form) with coefficients in $\mathbb{Q}$
has a non-trivial root in $\mathbb{Q}$
if and only if it has a non-trivial root in the field $\mathbb{R}$ of real numbers
and the field $\mathbb{Q}_{p}$ of $p$-adic numbers for every prime number $p$.
This is the so called local-global principle of quadratic forms.
\footnote{
\cite{Aitken-Lemmermeyer} is a well-written expository article on the local-global principle.
}

Contrary to the above great success in quadratic case,
many counterexamples to the local-global principle have been discovered in higher degrees.
For example, Selmer \cite{Selmer} constructed the famous counterexample
\[
	3x^{3}+4y^{3}+5z^{3},
\]
and Swinnerton-Dyer \cite{Swinnerton-Dyer} constructed the counterexample
\[
	t(t+x)(2t+x) - \prod_{m = 1}^{3}(x+\theta_{m}y+\theta_{m}^{2}z)
	 \quad (\theta_{m} := 2-2\cos(2\pi m/7)).
\]
For other counterexamples, see e.g.
\cite{Aitken-Lemmermeyer,Bremner,Browning-Heath-Brown,Cassels-Guy,Fujiwara-Sudo,Mordell,Jahnel}.
Among them, Mordell \cite{Mordell} and Jahnel \cite{Jahnel} generalized the above construction by Swinnerton-Dyer to infinite families of cubic forms of 4 variables.

In this paper, we imitate the above construction by Swinnerton-Dyer
to obtain new explicit infinite families of counterexamples to the local-global principle.
It is contrastive to Swinnerton-Dyer's original construction that
we do not need any concrete calculation in the proof of local solubility.
Indeed, we shall deduce it
from the Hasse-Weil lower bound to the number of $\mathbb{F}_{p}$-rational points
on non-singular projective curves over $\mathbb{F}_{p}$.
Moreover, unlike \cite{Mordell} nor \cite{Jahnel},
our construction gives explicit counterexamples of different dimensions at one time.
Note also that any one of our counterexamples (even restricted to $N = 7$)
is contained neither in \cite{Mordell} nor \cite{Jahnel}.

\begin{theorem} \label{main}
Let $n$ be a positive integer such that $N = 4n+3$ is a prime number.
Let $\alpha_{0}, \alpha_{1}, ..., \alpha_{n-1}, \beta, \gamma$ be integers
such that $\beta \geq 1$ and $2 \leq \gamma \leq 2n$.
Set $\theta_{m} = 2-2\cos(2 \pi m/N)$ ($m = 1, 2, ..., 2n+1$).
Assume that
\begin{enumerate}
\item
$\alpha_{0}(\alpha_{0}+1)$ is divisible by every prime number
smaller than $4n^{2}(2n-1)^{2}$ except for $N$, \\

\item
$\alpha_{0}(\alpha_{0}+1) \equiv \pm1 \pmod{N}$, and \\

\item
$\gcd(\alpha_{0}(\alpha_{0}+1), \alpha_{1}) = 1$ if $n \geq 2$.
\end{enumerate}
Then, the following polynomial is a counterexample to the local-global principle
of homogeneous forms of $\gamma+2$ variables of degree $2n+1$:
\[
	t(\alpha_{0}t^{n} + \sum_{i = 1}^{n-1}\alpha_{i}Nt^{n-i}x^{i}+N^{\beta}x^{n})
		((\alpha_{0}+1)t^{n}+\sum_{i = 1}^{n-1}\alpha_{i}Nt^{n-i}x^{i}+N^{\beta}x^{n})
			- \prod_{m = 1}^{2n+1}(x+\sum_{i = 1}^{\gamma} \theta_{m}^{i}y_{i}).
\]
\end{theorem}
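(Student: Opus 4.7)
Let $K = \mathbb{Q}(\theta_1)$, the maximal real subfield of $\mathbb{Q}(\zeta_N)$; so $[K:\mathbb{Q}] = 2n+1$ and the $\theta_m$ are the $\mathbb{Q}$-conjugates of $\theta_1$. Since $\theta_m = (1-\zeta_N^m)(1-\zeta_N^{-m})$, one has $\prod_{m=1}^{2n+1}\theta_m = \Phi_N(1) = N$, so $N$ is totally ramified in $\mathcal{O}_K = \mathbb{Z}[\theta_1]$ with uniformizer $\theta_1$ at the unique prime $\mathfrak{p}$ above $N$. Writing $f(t,x), g(t,x)$ for the two bracketed degree-$n$ factors in the statement (so $g - f = t^n$) and setting $\xi := x + \sum_{i=1}^{\gamma} \theta_1^i y_i \in \mathcal{O}_K$, the equation $F = 0$ becomes the norm relation
\[
    N_{K/\mathbb{Q}}(\xi) \;=\; t \cdot f(t,x) \cdot g(t,x).
\]

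\textbf{Global insolubility.} Given a hypothetical primitive integer solution, I would first reduce modulo $\mathfrak{p}$: using $\bar\xi = \bar x$ in $\mathbb{F}_N$, and (since $\beta \geq 1$) $\bar f \equiv \alpha_0 t^n$, $\bar g \equiv (\alpha_0+1) t^n \pmod{N}$, the norm relation forces
\[
    \alpha_0(\alpha_0+1)\, t^{2n+1} \;\equiv\; x^{2n+1} \pmod{N}.
\]
A valuation count would rule out $N \mid t$ and $N \mid x$ simultaneously, as this would propagate to $N \mid y_i$ for every $i$ and contradict primitivity; so in particular $N \nmid x$ and $N \nmid t$. I would then mimic the Diophantine heart of Swinnerton-Dyer's argument: factor the principal ideal $(\xi)$ in $\mathcal{O}_K$, exploit $\gcd(f,g) \mid t^n$ to disentangle the contributions of $t$, $f$, $g$ to this factorization, and invoke hypotheses (2) and (3) to rule out the remaining ideal configurations. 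This is the step I expect to be the main obstacle, as it is the direct generalization of Swinnerton-Dyer's original arithmetic from $N = 7$ to arbitrary prime $N = 4n+3$.

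\textbf{Local solubility.} At $v = \infty$ I would fix real $(x, y_1, \ldots, y_\gamma)$ with $N_{K/\mathbb{Q}}(\xi) \neq 0$; since $tfg$ is an odd-degree polynomial in $t$ (of degree $2n+1$), it is surjective onto $\mathbb{R}$ and thus attains the value $N_{K/\mathbb{Q}}(\xi)$. For a prime $p \neq N$ with $p \geq 4n^2(2n-1)^2 = (2n(2n-1))^2$, I would cut $V : F = 0$ by generic hyperplanes down to a plane curve $C \subset \mathbb{P}^2_{\mathbb{F}_p}$ of degree $2n+1$ and geometric genus at most $\binom{2n}{2} = n(2n-1)$; the Hasse--Weil bound gives $\#C(\mathbb{F}_p) \geq p + 1 - 2n(2n-1)\sqrt{p} \geq 1$ precisely when $p \geq 4n^2(2n-1)^2$, and any smooth $\mathbb{F}_p$-point Hensel-lifts to $\mathbb{Q}_p$. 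For the remaining primes $p \neq N$ below that bound, assumption (1) supplies $p \mid \alpha_0(\alpha_0+1)$: specialising $t$ or $x$ so that $f$ (if $p \mid \alpha_0$) or $g$ (if $p \mid \alpha_0+1$) acquires a mod-$p$ zero produces a non-singular $\mathbb{F}_p$-point of $V$ to Hensel-lift. Finally at $p = N$, I would set $y_1 = \ldots = y_\gamma = 0$ and $t = ux$, reducing $F = 0$ to $u \cdot f(u, 1) \cdot g(u, 1) = 1$ in $\mathbb{Z}_N$; modulo $N$ this reads $\alpha_0(\alpha_0+1)\, u^{2n+1} \equiv 1$, which by (2) and the fact that the image of the $(2n+1)$-th power map on $\mathbb{F}_N^\times$ is $\{\pm 1\}$ admits a solution $u_0 \in \mathbb{F}_N^\times$; the derivative is clearly nonzero at $u_0$, so Hensel's lemma lifts this to $\mathbb{Z}_N$.
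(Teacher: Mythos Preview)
Your local-solubility outline is essentially the paper's: the paper uses the explicit slice $x=0$ (giving the smooth curve $\alpha_0(\alpha_0+1)t^{2n+1}=N\prod_m(y_1+\theta_m y_2)$ of genus $n(2n-1)$) rather than generic hyperplanes, and at small primes it sets $(t,y_1,y_2)=(1,0,0)$ and checks that $x=0$ is a simple root precisely because of hypothesis~(3); but the ideas coincide.

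The global argument, however, has a real gap. You correctly note $g-f=t^n$ and propose to factor $(\xi)$ in $\mathcal{O}_K$, but you never say what the contradiction actually is, and you misattribute it to hypotheses~(2) and~(3). In the paper those hypotheses play \emph{no} role globally beyond the weak consequence $N\nmid\alpha_0(\alpha_0+1)$; they are used only for local solubility. The actual mechanism is this: once one normalises so that $t,x$ are coprime integers and shows $N\nmid N_{K/\mathbb{Q}}(\xi)$ (your valuation count, done carefully, gives this), the three integers $t$, $f$, $g$ are pairwise coprime and their product is the norm of the algebraic integer $\xi$, so each one is individually the norm of an integral ideal of $K$. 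Hence for every rational prime $q\mid t$ the exponent $v_q(t)$ is divisible by the residue degree $e_q$ of $q$ in $K$; but $e_q$ is the order of $q$ in $(\mathbb{Z}/N\mathbb{Z})^\times/\{\pm1\}$, so $q^{e_q}\equiv\pm1\pmod N$, whence $t\equiv\pm1\pmod N$. The same holds for $f$ and $g$. Then
\[
(\pm1)^{n}\equiv t^{n}=g-f\equiv(\pm1)-(\pm1)\in\{0,\pm2\}\pmod N,
\]
which is impossible since $N\ge 7$. This ``norm forces $\pm1\bmod N$'' step is the Diophantine heart of the argument, and it is absent from your sketch.
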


\begin{remark}
We can find infinitely many pairs $(N, \alpha_{0})$
for which the whole conditions in Theorem \ref{main} hold:
Indeed, since the second condition is equivalent to
$(2\alpha_{0}+1)^{2} \equiv 5, -3 \pmod{N}$,
the quadratic reciprocity law and Dirichlet's theorem on arithmetic progressions
ensure that we can find infinitely many $\alpha_{0}$
for which the second condition holds
whenever $N \equiv \pm1 \pmod{5}$ or $N \equiv 1 \pmod{3}$.
Therefore,
the Chinese remainder theorem ensures that,
for every prime number $N$
satisfying $N \equiv 3 \pmod{4}$ and $N \not\equiv 2, 8 \pmod{15}$,
we can find infinitely many $\alpha_{0}$ (with infinitely many $\alpha_{1}$ and other parameters)
for which the whole conditions in Theorem \ref{main} hold.
\end{remark}

\begin{remark}
It is plausible that our counterexamples may be explained by
the Brauer-Manin obstruction (cf. \cite{Jahnel}).
However, since our hypersurfaces
have singularities in general,
such a geometric argument should involve more technical calculations.
One of advantages of our proof is its almost purely-algebraic character,
which depends on
the cyclotomic field theory,
the Hasse-Weil bound,
and Hensel's lemma.
\end{remark}

This paper is organized as follows: In the next section, we prove that our polynomials have no non-trivial roots in $\mathbb{Q}$ in a more generalized form.
The core of our proof of the global unsolubility is exactly the same as Swinnerton-Dyer's original proof (cf. \S2)
although our counterexamples do not contain his counterexample itself.
\footnote{
Strictly speaking,
our construction gives a generalization of
the following cubic form,
which looks like Swinnerton-Dyer's cubic form:
\[
	t(2t+x)(3t+x) - \prod_{m = 1}^{3}(x+\theta_{m}y+\theta_{m}^{2}z)
	 \quad (\theta_{m} := 2-2\cos(2\pi m/7)).
\]
}
In the third section, we prove that our polynomials have non-trivial roots in $\mathbb{Q}_{p}$ for every prime number $p$ in a more generalized form.
This is sufficient to ensure the local solubility
because our polynomials have odd degrees e.g. with respect to $t$,
and so have non-trivial roots in $\mathbb{R}$.

\section{Global unsolubility}

In this section, we prove the following theorem, which implies that our polynomials in Theorem \ref{main} have no non-trivial roots in $\mathbb{Q}$.

\begin{theorem} \label{global_1}
Let $n$ be a positive integer such that $N = 4n+3$ is a prime number.
Let $\alpha_{0}, \tilde{\alpha}_{1}, \tilde{\alpha}_{2}, ..., \tilde{\alpha}_{n-1}, \beta$ be integers
such that $\beta \geq 0$.
Set $\theta_{m} = 2-2\cos(2\pi m/N)$ ($m = 1, 2, ..., 2n+1$).
Assume that $\alpha_{0}(\alpha_{0}+1)$ is prime to $N$.
Then, the polynomial
\[
	t(\alpha_{0}t^{n} + \sum_{i = 1}^{n-1} \tilde{\alpha}_{i} t^{n-i}x^{i}+N^{\beta}x^{n})
		((\alpha_{0}+1)t^{n}+\sum_{i = 1}^{n-1} \tilde{\alpha}_{i} t^{n-i}x^{i}+N^{\beta}x^{n})
			-\prod_{m = 1}^{2n+1} (x+\sum_{i = 1}^{2n} \theta_{m}^{i}y_{i})
\]
has no non-trivial roots in $\mathbb{Q}$.
\end{theorem}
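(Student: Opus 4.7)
The plan is to recast the equation as a norm equation in the maximal real subfield $K := \mathbb{Q}(\theta_1)$ of $\mathbb{Q}(\zeta_N)$, and to derive a contradiction by $\mathfrak{p}$-adic valuation analysis at the unique prime $\mathfrak{p} = (\theta_1)$ of $\mathcal{O}_K$ above $N$. The cyclotomic setup gives $K/\mathbb{Q}$ cyclic of degree $2n+1$, $\mathcal{O}_K = \mathbb{Z}[\theta_1]$, the minimal polynomial of $\theta_1$ Eisenstein at $N$, and $(N) = \mathfrak{p}^{2n+1}$. Writing $\eta := x + \sum_{i=1}^{2n} \theta_1^i y_i \in \mathcal{O}_K$, the equation becomes $tAB = N_{K/\mathbb{Q}}(\eta)$, where $A$ and $B$ are the two polynomials in $t, x$ displayed in the theorem.

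Assuming a non-trivial rational solution and clearing denominators, one reduces to a primitive integer tuple $(t, x, y_1, \ldots, y_{2n})$ with $\gcd = 1$. The case $t = 0$ forces $N_{K/\mathbb{Q}}(\eta) = 0$, hence $\eta = 0$, hence by $\mathbb{Q}$-linear independence of $1, \theta_1, \ldots, \theta_1^{2n}$ all coordinates vanish, contradicting non-triviality. For $t \neq 0$, the key identities to establish first are
\[
	v_\mathfrak{p}(\eta) = v_N(tAB) \quad \text{and} \quad v_\mathfrak{p}(\eta) = \min_{0 \leq i \leq 2n}\bigl(i + (2n+1) v_N(y_i)\bigr)
\]
(with $y_0 := x$; the minimum is attained at a unique index since the values are pairwise distinct modulo $2n+1$). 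A case analysis on $(v_N(t), v_N(x))$, using that $\alpha_0(\alpha_0+1)$ is coprime to $N$, then dispatches three sub-cases: if exactly one of $t, x$ is divisible by $N$, inspection of $v_N(A)$ and $v_N(B)$ yields a strict mismatch with the minimum formula; if both $t$ and $x$ are divisible by $N$, every monomial of $A$ and $B$ is divisible by $N^n$, so $v_N(tAB) \geq 2n+1$, forcing $N \mid y_i$ for all $i$ and contradicting primitivity.

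The crux is the remaining case $N \nmid t$ and $N \nmid x$, to be handled by Swinnerton-Dyer's classical argument. The identity $B - A = t^n$ combined with primitivity implies that $t, A, B$ are pairwise coprime integers, so the principal ideal $(\eta) \subset \mathcal{O}_K$ decomposes as $\mathfrak{a}_t \mathfrak{a}_A \mathfrak{a}_B$ with $N_{K/\mathbb{Q}}(\mathfrak{a}_\bullet) = (\bullet)$. The Hasse norm principle for the cyclic extension $K/\mathbb{Q}$, combined with local class field theory at the totally ramified prime $\mathfrak{p}$ — where $(N-1)/(2n+1) = 2$ makes the local norm image intersected with $\mathbb{Z}_N^\times$ the preimage of $\{\pm 1\} \subset \mathbb{F}_N^\times$ — is then expected to force each of $t, A, B$ to be $\equiv \pm 1 \pmod N$ individually. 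Since $t^n \equiv \pm 1 \pmod N$ by Euler's criterion (as $(N-1)/2 = 2n+1$) and $B - A = t^n$, one gets $(\pm 1) - (\pm 1) \equiv \pm 1 \pmod N$, i.e., $N \mid 1$ or $N \mid 3$, contradicting $N = 4n+3 \geq 7$. The hardest step is precisely obtaining the individual local-norm conditions on $t, A, B$ from the joint equation $tAB = N_{K/\mathbb{Q}}(\eta)$ via the coprime ideal factorization; this is Swinnerton-Dyer's classical trick and is expected to transfer unchanged to our more general setting.
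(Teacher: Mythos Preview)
Your plan follows the paper's argument in outline---norm equation in $K=\mathbb{Q}(\theta_1)$, eliminate $t=0$, show $N\nmid tAB$, get $t,A,B$ pairwise coprime, force each $\equiv\pm1\pmod N$, and contradict $B-A=t^n$---but two steps do not go through as written.

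\textbf{Normalization.} You clear denominators to a primitive integer tuple with overall $\gcd(t,x,y_1,\ldots,y_{2n})=1$. That does \emph{not} give $\gcd(t,x)=1$: a prime $q\neq N$ may divide both $t$ and $x$ (hence every monomial of $A$ and $B$) while missing some $y_i$, so your claim ``$B-A=t^n$ combined with primitivity implies $t,A,B$ pairwise coprime'' is false. The paper's fix is to scale so that $t$ and $x$ are coprime integers, explicitly allowing $y_i\in\mathbb{Q}$. Then a common prime $q$ of $t$ and $A$ must divide $N^\beta x^n$; $q=N$ is excluded because one has already shown $N\nmid tAB$, and $q\mid x$ contradicts $\gcd(t,x)=1$. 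With this normalization your $\mathfrak{p}$-adic identities still hold (the formula $v_\mathfrak{p}(\eta)=\min_i\bigl(i+(2n+1)v_N(y_i)\bigr)$ is valid for rational $y_i$), and the case split collapses: only one of $v_N(t),v_N(x)$ can be positive.

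\textbf{The congruence $t,A,B\equiv\pm1\pmod N$.} Invoking the Hasse norm principle is off target: from the coprime factorization you only know that $|t|,|A|,|B|$ are norms of integral \emph{ideals} of $K$, not of elements, so the element norm theorem does not apply (and $\mathfrak{a}_t$ need not be principal). The paper's argument is elementary: since $\mathrm{Gal}(K/\mathbb{Q})\cong(\mathbb{Z}/N\mathbb{Z})^\times/\{\pm1\}$, the residue degree $f_q$ of any $q\neq N$ equals the order of $q$ in that quotient, so $q^{f_q}\equiv\pm1\pmod N$. From $|t|=N_{K/\mathbb{Q}}(\mathfrak{a}_t)$ one gets $f_q\mid v_q(t)$ for each $q\mid t$, hence $t=\pm\prod_q(q^{f_q})^{v_q(t)/f_q}\equiv\pm1\pmod N$. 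No class field theory is needed. (Also, $t^n\equiv\pm1$ follows from $t\equiv\pm1$, not from Euler's criterion, which would only give $t^{2n+1}\equiv\pm1$.)
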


\begin{proof}
We prove our assertion by contradiction.
Suppose that $(t, x, y_{1}, ..., y_{2n}) \neq (0, ..., 0)$ is a non-trivial root of the above polynomial.

First, note that $t \neq 0$.
Indeed, since $[\mathbb{Q}(\theta_{m}) : \mathbb{Q}] = 2n+1$,
$t = 0$ implies that $x = y_{1} = \cdots = y_{2n} = 0$, which is a contradiction.
Therefore, we may assume that $t$ and $x$ are coprime integers
by multiplying $t, x, y_{1}, ..., y_{2n}$ by a same suitable rational number.
Here, note that $y_{1}, ..., y_{2n}$ are not necessarily integers.

Next, we prove that
$\prod_{m = 1}^{2n+1} (x+\sum_{i = 1}^{2n} \theta_{m}^{i}y_{i})$
is prime to $N$ by contradiction.
Suppose that it is divisible by $N$.
Then, since the above product is a norm of $x+\sum_{i = 1}^{2n} \theta_{1}^{i}y_{i}$
with respect to $\mathbb{Q}(\theta_{1})/\mathbb{Q}$,
and $\theta_{1}$ generates the unique prime ideal over $N$,
we see that $x+\sum_{i = 1}^{2n} \theta_{m}^{i}y_{i}$ is divisible by $\theta_{1}$ for each $m$.
Since the $\theta_{1}$-adic valuations of $x, \theta_{m}y_{1}, ..., \theta_{m}^{2n}y_{2n}$ are distinct,
we see that all of $y_{1}, ..., y_{2n}$ are $\theta_{1}$-adic integers.
Hence, $x$ is divisible by $\theta_{1}$, so by $N$.
However, since we assume that $\alpha_{0}(\alpha_{0}+1)$ is prime to $N$,
we see that $t$ is also divisible by $N$, which is a contradiction.
Therefore, $\prod_{m = 1}^{2n+1} (x+\sum_{i = 1}^{2n} \theta_{m}^{i}y_{i})$ is prime to $N$.

As a consequence, we see that
$t$,
$\alpha_{0}t^{n} + \sum_{i = 0}^{n-1} \tilde{\alpha}_{i} t^{n-i}x^{i}+N^{\beta}x^{n}$, and
$(\alpha_{0}+1)t^{n}+\sum_{i = 1}^{n-1} \tilde{\alpha}_{i} t^{n-i}x^{i}+N^{\beta}x^{n}$
are pairwise coprime integers.
On the other hand, their product 
is a norm of an algebraic integer $x+\sum_{i = 1}^{2n} \theta_{m}^{i}y_{i}$,
each of themselves must be the norm of an integral ideal of $\mathbb{Q}(\theta_{1})$.
Suppose that $t$ has a prime divisor $q$ whose residual degree in $\mathbb{Q}(\theta_{1})/\mathbb{Q}$ is $e$.
Then, the $q$-adic valuation of $t$ must be divisible by $e$.
Moreover, the residual degree of $q$ in $\mathbb{Q}(\exp(2\pi i/N))/\mathbb{Q}$
coincides with the order of $q \pmod{N}$ in $(\mathbb{Z}/N\mathbb{Z})^{\times}$ (cf. \cite{Birch65}).
Therefore, we must have $t \equiv \pm 1 \pmod{N}$,
and similarly for the other two factors.
However, this implies that
\begin{align*}
	(\pm 1)^{n}
	&\equiv t^{n}
	= \left( (\alpha_{0}+1)t^{n}+\sum_{i = 1}^{n-1} \tilde{\alpha}_{i} t^{n-i}x^{i}+N^{\beta}x^{n} \right)
		- \left( \alpha_{0}t^{n} + \sum_{i = 0}^{n-1} \tilde{\alpha}_{i} t^{n-i}x^{i}+N^{\beta}x^{n} \right) \\
	&\equiv (\pm1) - (\pm1) \pmod{N},
\end{align*}
which is a contradiction as desired.
\end{proof}

We may also prove the following theorems in similar manners:

\begin{theorem} \label{global_2}
Let $n \geq 2$ be an integer such that $N = 4n+1$ is a prime number.
Let $\alpha_{0}, \tilde{\alpha}_{1}, \tilde{\alpha}_{2}, ..., \tilde{\alpha}_{n-2}, \beta$
be integers such that $\beta \geq 0$.
Set $\theta_{m} = 2-2\cos(2\pi m/N)$ ($m = 1, 2, ..., 2n$).
Assume that $\alpha_{0}(\alpha_{0}+1)$ is prime to $N$.
Then,
\[
	t^{2}(\alpha_{0}t^{n-1} + \sum_{i = 1}^{n-2} \tilde{\alpha}_{i} t^{n-1-i}x^{i}+N^{\beta}x^{n-1})
		((\alpha_{0}+1)t^{n-1}+\sum_{i = 1}^{n-2} \tilde{\alpha}_{i} t^{n-1-i}x^{i}+N^{\beta}x^{n-1})
			- \prod_{m = 1}^{2n} (x+\sum_{i = 1}^{2n-1} \theta_{m}^{i}y_{i})
\]
has no non-trivial roots in $\mathbb{Q}$.
\footnote{
Indeed,
since we assume that $N > 5$,
we can deduce a contradiction from
$t^{n-1} \equiv (\pm1)-(\pm1) \pmod{N}$ and $(t^{2})^{n-1} \equiv \pm 1 \pmod{N}$.
On the other hand, if $N = 5$,
our polynomials become quadratic polynomials,
hence the global solubility is reduced to the local-solubility via Minkowski's theorem.
}
\end{theorem}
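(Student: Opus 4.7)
The plan is to mimic the proof of \Cref{global_1} with two modifications: the factor $t$ is replaced by $t^{2}$, and the relevant modulus is $N = 4n+1$ rather than $4n+3$. Arguing by contradiction, suppose $(t, x, y_{1}, \ldots, y_{2n-1}) \neq (0, \ldots, 0)$ is a rational root. I first rule out $t = 0$ exactly as in \Cref{global_1}: since $[\mathbb{Q}(\theta_{1}) : \mathbb{Q}] = 2n$ equals the number of remaining unknowns $x, y_{1}, \ldots, y_{2n-1}$ and they are the coordinates of $x + \sum_{i=1}^{2n-1} \theta_{m}^{i} y_{i}$ on the power basis $1, \theta_{m}, \ldots, \theta_{m}^{2n-1}$, vanishing of the norm product would force all of them to vanish. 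After clearing denominators I may assume $\gcd(t, x) = 1$, and the same $\theta_{1}$-adic valuation argument as in \Cref{global_1} shows that $\prod_{m=1}^{2n}(x + \sum_{i} \theta_{m}^{i} y_{i})$ is coprime to $N$.

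The next step is to show that the three integer factors
\[
	t^{2}, \qquad A := \alpha_{0}t^{n-1} + \sum_{i=1}^{n-2} \tilde{\alpha}_{i} t^{n-1-i}x^{i} + N^{\beta}x^{n-1}, \qquad B := A + t^{n-1}
\]
are pairwise coprime and coprime to $N$, so that each is the norm of an integral ideal of $\mathbb{Q}(\theta_{1})$. Coprimality to $N$ has just been established; pairwise coprimality follows from $\gcd(t, x) = 1$ together with the observation $A \equiv B \equiv N^{\beta} x^{n-1} \pmod{t}$ and the relation $B - A = t^{n-1}$. For any rational prime $q$ dividing one of these three integers, its residue degree in $\mathbb{Q}(\theta_{1})/\mathbb{Q}$ equals the order of $q$ in $(\mathbb{Z}/N\mathbb{Z})^{\times}/\{\pm 1\}$ and must divide the corresponding $q$-adic valuation. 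Exactly as in \Cref{global_1} this yields
\[
	t^{2} \equiv \pm 1, \qquad A \equiv \pm 1, \qquad B \equiv \pm 1 \pmod{N}.
\]

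The heart of the matter, and the only place where the hypothesis $n \geq 2$ enters, is to derive a contradiction from these three congruences. Subtracting the last two gives $t^{n-1} = B - A \equiv (\pm 1) - (\pm 1) \in \{0, \pm 2\} \pmod{N}$, while squaring and using the first congruence yields $(t^{n-1})^{2} = (t^{2})^{n-1} \equiv \pm 1 \pmod{N}$. Combining the two possibilities forces $0 \equiv \pm 1$ or $4 \equiv \pm 1 \pmod{N}$, i.e.\ $N \mid 3$ or $N \mid 5$. Since $n \geq 2$ and $N = 4n+1$ is prime one has $N \geq 13$, ruling both out. I expect this squaring step to be the main conceptual obstacle compared with \Cref{global_1}: because the exponent of $t$ is now even, the naive difference $(\pm 1) - (\pm 1)$ no longer produces a contradiction by itself, and the auxiliary congruence $(t^{2})^{n-1} \equiv \pm 1 \pmod{N}$ is precisely what closes the argument, at the cost of having to exclude the small modulus $N = 5$.
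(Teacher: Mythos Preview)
Your proposal is correct and follows essentially the same route as the paper: it adapts the proof of \Cref{global_1} verbatim up to the point where $t^{2}\equiv\pm1$, $A\equiv\pm1$, $B\equiv\pm1\pmod N$, and then closes with exactly the auxiliary step the paper records in its footnote, namely combining $t^{n-1}\equiv(\pm1)-(\pm1)$ with $(t^{2})^{n-1}\equiv\pm1\pmod N$ to force $N\mid 3$ or $N\mid 5$, which is excluded by $N\geq 13$.
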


\begin{theorem} \label{global_3}
Let $n \geq 2$ be an integer such that $N = 2n+1$ is a prime number.
Let $\alpha_{0}, \tilde{\alpha}_{1}, \tilde{\alpha}_{2}, ..., \tilde{\alpha}_{n-2}, \beta$ be integers such that $\beta \geq 0$.
Set $\theta_{m} = 1-\exp(2\pi im/N)$ ($m = 1, 2, ..., 2n$).
Assume that $\alpha_{0}(\alpha_{0}+1)$ is prime to $N$.
Then, the polynomial
\[
	t^{2}(\alpha_{0}t^{n-1} + \sum_{i = 1}^{n-2} \tilde{\alpha}_{i} t^{n-1-i}x^{i} + N^{\beta}x^{n-1})
		((\alpha_{0}+1)t^{n-1}+\sum_{i = 1}^{n-2} \tilde{\alpha}_{i} t^{n-1-i}x^{i} + N^{\beta}x^{n-1})
			- \prod_{m = 1}^{2n} (x+\sum_{i = 1}^{2n-1} \theta_{m}^{i}y_{i})
\]
has no non-trivial roots in $\mathbb{Q}$.
\footnote{Indeed, we can deduce a contradiction from $t^{n-1} \equiv 1-1 \pmod{N}$ and $(t^{2})^{n-1} \equiv 1 \pmod{N}$.}
\end{theorem}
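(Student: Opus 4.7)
The plan is to imitate the proof of Theorem \ref{global_1}, working now in the full cyclotomic field $K = \mathbb{Q}(\zeta_N)$ with $\zeta_N = \exp(2\pi i/N)$, which has degree $N - 1 = 2n$ over $\mathbb{Q}$. The element $\theta_1 = 1 - \zeta_N$ is a uniformizer of the unique prime $\mathfrak{p}$ above $N$ (totally ramified with ramification index $2n$), and the $\theta_m$ for $m = 1, \ldots, 2n$ form its complete Galois orbit; thus the right-hand side of the displayed equation equals $N_{K/\mathbb{Q}}(\alpha)$ for $\alpha = x + \sum_{i=1}^{2n-1} \theta_1^i y_i$.

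Suppose for contradiction a nontrivial rational solution $(t, x, y_1, \ldots, y_{2n-1})$ exists. Since $\{1, \theta_1, \ldots, \theta_1^{2n-1}\}$ is a $\mathbb{Q}$-basis of $K$, the case $t = 0$ would force $\alpha = 0$ and hence $x = y_1 = \cdots = y_{2n-1} = 0$; therefore $t \neq 0$, and after rescaling I may take $t$ and $x$ to be coprime integers. The coprimality of $N_{K/\mathbb{Q}}(\alpha)$ with $N$ is verified exactly as in Theorem \ref{global_1}: the $\mathfrak{p}$-adic valuations of the terms $x, \theta_m y_1, \ldots, \theta_m^{2n-1} y_{2n-1}$ fill the distinct residue classes $0, 1, \ldots, 2n-1$ modulo the ramification index $2n$, and each rational $y_i$ has $v_\mathfrak{p}(y_i) \in 2n\mathbb{Z}$, so $\mathfrak{p} \mid \alpha$ would propagate to $N \mid x$; reducing the defining equation modulo $N$ then yields $\alpha_0(\alpha_0+1) t^{2n} \equiv 0 \pmod N$, contradicting the hypothesis on $\alpha_0$ together with $\gcd(t, x) = 1$.

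Writing $A = \alpha_0 t^{n-1} + \sum_{i=1}^{n-2}\tilde{\alpha}_i t^{n-1-i} x^i + N^\beta x^{n-1}$ and $B = (\alpha_0 + 1) t^{n-1} + \sum_{i=1}^{n-2}\tilde{\alpha}_i t^{n-1-i} x^i + N^\beta x^{n-1}$, the integers $t^2, A, B$ are pairwise coprime: a common prime divisor of any two would, via $B - A = t^{n-1}$, divide $t$, whereupon the surviving $N^\beta x^{n-1}$ term together with $\gcd(t,x) = 1$ would force the divisor to be $N$, contradicting the previous paragraph. Since the residual degree of any rational prime $q \neq N$ in $K$ equals the order of $q$ in $(\mathbb{Z}/N\mathbb{Z})^\times$ (cf.~\cite{Birch65}), the same norm-of-ideal argument as in Theorem \ref{global_1} then gives $|t^2| \equiv |A| \equiv |B| \equiv 1 \pmod N$. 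Because $K$ is a CM field, $N_{K/\mathbb{Q}}(\alpha) = t^2 AB$ is positive; combined with $t^2 > 0$ this forces $A$ and $B$ to share the same sign, hence $A \equiv B \pmod N$. Therefore $t^{n-1} = B - A \equiv 0 \pmod N$, so $N \mid t$, contradicting $t^2 \equiv 1 \pmod N$.

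The delicate step is the sign bookkeeping at the end: in Theorem \ref{global_1} the three factors were only pinned down modulo $\pm 1$, producing the more involved congruence $t^n \equiv (\pm 1) - (\pm 1) \pmod N$. Here the positivity of the norm on the CM field $K$ eliminates this ambiguity and produces the cleaner contradiction recorded in the footnote.
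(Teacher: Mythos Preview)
Your proof is correct and follows exactly the route the paper sketches: imitate the argument of Theorem~\ref{global_1} in the full cyclotomic field $\mathbb{Q}(\zeta_N)$ to arrive at $t^{n-1}\equiv 0$ and $(t^{2})^{n-1}\equiv 1 \pmod{N}$, which is the contradiction recorded in the footnote. Your explicit invocation of the CM property of $\mathbb{Q}(\zeta_N)$ (forcing $A$ and $B$ to share a sign, hence $B-A\equiv 0$) is precisely what distinguishes the footnote's ``$1-1$'' here from the ``$(\pm1)-(\pm1)$'' in Theorem~\ref{global_2}, so your endgame matches the paper's intended one.
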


\begin{remark}
The Chevalley-Warning theorem (cf. \cite{Chevalley}, \cite[Theorem 3, Ch. I]{Serre}) ensures that
the polynomials in Theorems \ref{global_1}, \ref{global_2}, and \ref{global_3} modulo $p$ has a non-trivial root in $\mathbb{F}_{p}$ for every prime number $p$.
\end{remark}

\section{Local solubility}

In this section, we prove the following theorem,
which implies that our polynomials in Theorem \ref{main} have non-trivial roots in $\mathbb{Q}_{p}$
for every prime number $p$.

\begin{theorem} \label{local}
Let $p$ be a prime number.
Let $n$ be a positive integer such that $N = 4n+3$ is a prime number.
Let $\alpha_{0}, \alpha_{1}, ..., \alpha_{n}, \beta_{1}, ..., \beta_{n}$ be integers.
Set $\theta_{m} = 2-2\cos(2\pi m/N)$ ($m = 1, 2, ..., 2n+1$) and 
\begin{align*}
	f(t, x, y_{1}, y_{2})
	&= t(\alpha_{0} t^{n}+\sum_{i = 1}^{n} \alpha_{i}N t^{n-i}x^{i})
		((\alpha_{0}+1)t^{n}+\sum_{i = 1}^{n} \beta_{i}Nt^{n-i}x^{i}) \\
	& - \prod_{m = 1}^{2n+1}(x+\theta_{m}y_{1}+\theta_{m}^{2}y_{2}).
\end{align*}
Assume that
\begin{enumerate}
\item
$\alpha_{0}(\alpha_{0}+1)$ is divisible by every prime number smaller than $4n^{2}(2n-1)^{2}$ except for $N$, \\

\item
$\alpha_{0}(\alpha_{0}+1) \equiv \pm1 \pmod{N}$, and \\

\item
$\gcd(\alpha_{0}, \alpha_{1}) = \gcd(\alpha_{0}+1, \beta_{1}) = 1$.
\end{enumerate}
Then, the projective surface $S$ defined by $f(t, x, y_{1}, y_{2})$ has a $\mathbb{Q}_{p}$-rational point.
\end{theorem}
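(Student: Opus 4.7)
The plan is to construct a $\mathbb{Q}_p$-rational point of $S$ for every prime $p$ by splitting into three cases: (i) $p\mid\alpha_0(\alpha_0+1)$ with $p\neq N$; (ii) $p=N$; (iii) $p\nmid\alpha_0(\alpha_0+1)N$. By hypothesis (1), case (iii) automatically forces $p\geq 4n^2(2n-1)^2$, which is precisely the range where the Hasse--Weil bound becomes effective.

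For cases (i) and (ii), I will restrict $S$ to the projective line $\{y_1=y_2=0\}$ and set $t=1$, reducing the problem to finding a simple root modulo $p$ of
\[
 h(x) := A(1,x)\,B(1,x) - x^{2n+1},
\]
where $A(1,x)=\alpha_0+\sum_{i=1}^n \alpha_i N x^i$ and $B(1,x)=(\alpha_0+1)+\sum_{i=1}^n \beta_i N x^i$. In case (i) one has $h(0)=\alpha_0(\alpha_0+1)\equiv 0\pmod p$, and the derivative $h'(0)=N\bigl(\alpha_1(\alpha_0+1)+\alpha_0\beta_1\bigr)$ is a $p$-adic unit: whichever of $\alpha_0,\alpha_0+1$ is divisible by $p$, condition (3) makes the surviving term ($N\alpha_1$ or $-N\beta_1$ modulo $p$) coprime to $p$. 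In case (ii), condition (2) yields $h(\pm 1)\equiv\alpha_0(\alpha_0+1)\mp 1\equiv 0\pmod N$ for an appropriate sign, and $h'(\pm 1)\equiv -(2n+1)\pmod N$ is a unit because $N=4n+3>2n+1$. Either way, Hensel's lemma lifts $x$ to $\mathbb{Z}_p$ and $(1:x:0:0)$ is a $\mathbb{Q}_p$-rational point of $S$.

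For case (iii) I will instead restrict $S$ to the hyperplane $\{x=0\}$, obtaining the plane curve
\[
 C\colon\ \alpha_0(\alpha_0+1)\,t^{2n+1} = N\cdot G(y_1,y_2),\qquad G(y_1,y_2):=\prod_{m=1}^{2n+1}(y_1+\theta_m y_2),
\]
in $\mathbb{P}^2_{(t:y_1:y_2)}$, of degree $2n+1$ and arithmetic genus $n(2n-1)$. The key step is to verify that $C$ is smooth over $\mathbb{F}_p$. Its partial derivative in $t$ is $(2n+1)\alpha_0(\alpha_0+1)\,t^{2n}$, which vanishes only at $t=0$ since $p\nmid\alpha_0(\alpha_0+1)$ (case (iii)) and $p\nmid 2n+1$ (any prime dividing $2n+1$ is at most $2n+1<4n^2(2n-1)^2$ and distinct from $N=4n+3$, hence lies in case (i)). Setting $t=0$, the equation of $C$ forces $G(y_1,y_2)\equiv 0\pmod p$, while the simultaneous vanishing of $G_{y_1},G_{y_2}$ would require two of the linear factors $y_1+\theta_m y_2$ to coincide modulo $p$; this is impossible since $p\neq N$ keeps $\mathbb{Q}(\theta_1)/\mathbb{Q}$ unramified at $p$, so the $\theta_m$ remain distinct modulo $p$.

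With smoothness in hand, the Hasse--Weil bound gives
\[
 \#C(\mathbb{F}_p) \geq p+1-2n(2n-1)\sqrt{p} \geq 1
\]
because $p\geq 4n^2(2n-1)^2=(2n(2n-1))^2$. Any $\mathbb{F}_p$-point on the smooth curve $C$ is a smooth point and therefore lifts via Hensel's lemma to a $\mathbb{Z}_p$-point of $C$, producing a $\mathbb{Q}_p$-point $(t:0:y_1:y_2)$ of $S$. The hardest step of this plan is the smoothness verification for $C$ in case (iii); once it is available, the combination of Hasse--Weil and Hensel makes the conclusion automatic.
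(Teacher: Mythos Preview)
Your proof is correct and follows essentially the same strategy as the paper: handle $p\mid\alpha_0(\alpha_0+1)$ and $p=N$ by finding a simple root of $f(1,x,0,0)$ via Hensel's lemma (using conditions (3) and (2) respectively), and for the remaining primes $p\geq 4n^2(2n-1)^2$ restrict to $x=0$ and invoke the Hasse--Weil bound on the resulting smooth plane curve of genus $n(2n-1)$. Your write-up in fact supplies the smoothness verification for $C$ over $\mathbb{F}_p$ that the paper simply asserts.
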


\begin{proof}
First, note that $S$ has a $\mathbb{Q}_{p}$-rational point
whenever $p \geq 4n^{2}(2n-1)^{2}$ ($> N$) and $p$ is prime to $\alpha_{0}(\alpha_{0}+1)$:
Indeed, since in this case the polynomial
\[
	f(t, 0, y_{1}, y_{2})
	= \alpha_{0}(\alpha_{0}+1)t^{2n+1}-N\prod_{m = 1}^{2n+1}(y_{1}+\theta_{m}y_{2})
\]
defines a non-singular curve of genus $n(2n-1)$ over $\mathbb{F}_{p}$,
the Hasse-Weil bound (cf. \cite{Bombieri}, \cite{Weil}) ensures that
it has an $\mathbb{F}_{p}$-rational point.
Therefore, Hensel's lemma (cf. \cite[Corollary 1, Ch. II]{Serre}) ensures that
$S$ has a $\mathbb{Q}_{p}$-rational point.
Thus, it is sufficient to consider the cases
where $p < 4n^{2}(2n-1)^{2}$ or $p$ divides $\alpha_{0}(\alpha_{0}+1)N$.
By the first assumption, the first case is included in the second case.

Suppose that $p$ ($\neq N$) divides $\alpha_{0}$.
Then, the polynomial
\[
	f(1, x, 0, 0)
	\equiv \alpha_{1}Nx+\cdots+\alpha_{n}\beta_{n}N^{2}x^{2n}-x^{2n+1} \pmod{p}
\]
has a root $x = 0$ in $\mathbb{F}_{p}$,
which is not a zero of the $x$-derivative
since we assume that $\alpha_{1}$ is prime to $\alpha_{0}$.
Therefore, Hensel's lemma ensures that $S$ has a $\mathbb{Q}_{p}$-rational point.
The case where $p$ divides $\alpha_{0}+1$ is similar.

Finally, suppose that $p = N$. Then, the polynomial
\[
	f(1, x, y_{1}, y_{2})
	\equiv \alpha_{0}(\alpha_{0}+1)-x^{2n+1} \pmod{N}
\]
has a root $x = \pm1$ in $\mathbb{F}_{N}$
since we assume that $\alpha_{0}(\alpha_{0}+1) \equiv \pm1 \pmod{N}$,
which is not a zero of the $x$-derivative.
Therefore, Hensel's lemma ensures that $S$ has a $\mathbb{Q}_{N}$-rational point.
This completes the proof.
\end{proof}

\begin{remark}
If we replace $N = 4n+3$ to $4n+1$ or $\theta_{m} = 2-2\cos(2\pi m/N)$ to $1-\exp(2\pi m/N)$
as in Theorems \ref{global_2} and \ref{global_3},
then we obtain a counterpart of Theorem \ref{local}.
The detail is left to interested readers.
\end{remark}

\section*{Acknowledgements}

The author would like to express his sincere gratitude to Yoshinori Kanamura and Yosuke Shimizu for discussing related topics. In particular, Kanamura's question on how to construct explicit infinite families of irreducible and higher-dimensional counterexamples to the Hasse principle was the starting point of this study. The author would like to express his sincere gratitude also to his advisor Prof. Ken-ichi Bannai, Yoshinori Kanamura, and  Masataka Ono for reading the draft of this manuscript and giving many valuable comments to make it easy to read.

\begin{bibdiv}
\begin{biblist}
\bibselect{Hasse-principle}
\end{biblist}
\end{bibdiv}

\end{document}